\newcolumntype{C}[1]{>{\centering\let\newline\\\arraybackslash\hspace{0pt}}m{#1}}
\newcolumntype{L}[1]{>{\raggedright\let\newline\\\arraybackslash\hspace{0pt}}m{#1}}
\newtheorem{theorem}{Theorem}
\theoremstyle{definition}
\newtheorem{definition}{Definition}
\newtheorem{lemma}{Lemma}
\newtheorem{corollary}[lemma]{Corollary}
\theoremstyle{remark}
\newtheorem*{remark}{Remark}
\theoremstyle{conjecture}
\newtheorem*{acknowledgement}{Acknowledgement}
\DeclareMathOperator{\sinc}{sinc}
\newcommand{\onehalf}{\frac{1}{2}}
\newcommand{\N}{\mathbb{N}}
\newcommand{\R}{\mathbb{R}}
\newcommand{\Z}{\mathbb{Z}}
\newcommand{\Cone}{\mathcal{C}^1}
\newcommand{\ra}{\rightarrow}
\newcommand{\mt}{\mapsto}
\newcommand{\sd}{\mathcal{D}}
\newcommand{\fatone}{\mathds{1}}
\newcommand{\dx}{\ \mathrm{d}x}
\newcommand{\dt}{\ \mathrm{d}\theta}
\newcommand{\gl}{\mathrm{GL(\R^2)}}
\newcommand{\s}{\mathbb{S}^2}
  \date{}
\begin{document}
	\title{Spherical cap discrepancy of perturbed lattices under the Lambert projection}
	\author{Damir Ferizovi\'{c}\thanks{ KU Leuven, Department of Mathematics, Celestijnenlaan 200b, Box 2400, 3001 Leuven, Belgium. Tel:+32 16 37 62 87, damir.ferizovic@kuleuven.be. Keywords: spherical cap discrepancy, two sphere, Lambert map, lattices.  MSC2020: 11K38 (primary), 52C99, 52A10.
			The author thankfully acknowledges support by the  Methusalem grant of the Flemish Government. }}
\maketitle 
\begin{abstract}
Given any full rank lattice $\Lambda$ and a natural number $N$, we regard the point set $\Lambda/N\cap(0,1)^2$  under the Lambert map to the unit sphere, and show that its  spherical cap discrepancy is at most of order $N$, with leading coefficient given explicitly  and depending on $\Lambda$ only. The proof is established using a lemma that bounds the amount of intersections of certain curves with fundamental domains that tile $\R^2$, and even allows for local perturbations of $\Lambda$ without affecting the bound, proving to be stable for numerical applications.  A special case yields the smallest constant for the leading term of the cap discrepancy for deterministic algorithms up to date.

\end{abstract}

\section{Introduction and main result}

The problem of distributing $N$-many points $P_N=\{p_1,\ldots,p_N\}$ uniformly on a sphere is well known and has applications in numerical integration, approximation, cartography and the applied sciences, see \cite{Whyte}, \cite{KuijlaarsSaff} and \cite{BrauchartGrabner} for some reviews which span over a century of  development.

One distinguishes between point sequences and point sets: in the first case $P_N$ consists of the initial $N$ elements of a sequence, and 
in the latter case  $P_N$ usually can  only be constructed for infinitely many $N$, with no relation from one admissible $N$ to the next.


There are a few notions to quantify how well distributed $P_N$ is: one example is the notion of energy, which is the sum of the $N(N-1)$-many evaluations of  pair-interactions under a given lower semi-continuous potential $F$, where for $F(x,y)=\|x-y\|^{-s}$ one speaks of the Riesz $s$-energy and minimizers of this potential have been intensively investigated, see for instance the comprehensive monograph by Borodachov, Hardin and Saff \cite{BorodachovSaffHardin}.
Another notion is that of seperation distance, i.e. the minimum of the pairwise distances of distinct elements among $P_N$. 
Our main result uses the notion of spherical cap discrepancy. 

\begin{definition}
	Let $P_N=\{p_1,\ldots,p_N\}\subset \s$. 
	The spherical cap discrepancy is
	$$\sd(P_N)=\sup_{w\in\s}\sup_{-1\leq t\leq1}\Bigg| \frac{1}{N}\sum_{j=1}^{N}
	\chi_{C(w,t)}(p_j)-\sigma(C(w,t))\Bigg|. $$
\end{definition}

Here $\s\subset \R^3$ is the unit two-sphere, i.e. the set of all unit vectors with norm induced by the standard dot product $\langle \cdot,\cdot \rangle$. A spherical cap is the set
$$C(w,t)=\big\{x\in\s\ |\ \langle w,x\rangle\geq t  \big\},$$
with center $w\in\s$ and height $t\in(-1,1)$. The boundary of a set $S$ is denoted by $\partial S$, thus 
$$\partial C(w,t)=\big\{x\in\s\ |\ \langle w,x\rangle=t  \big\},$$
and $\sigma$ is the normalized surface measure of $\s$, such that  $\sigma\big(C(w,t)\big)=\onehalf(1-t)$. The characteristic function of a set $A$ is $\chi_A$, i.e. $\chi_A(x)=1$ iff $x\in A$, and $0$ else. 

\vspace{0.3cm}

A reason why discrepancy is useful comes from the following relation
$$\Bigg|\frac{1}{N}\sum_{j=1}^Nf(p_j)-\int_{\s} f\ \mathrm{d}\sigma\Bigg|\leq \hat\sd(P_N)\mathcal{V}(f), $$
where $\hat\sd$ is a notion of discrepancy (not restricted to spherical cap) and $\mathcal{V}(f)$ is a constant depending on the function $f$, see \cite{GrabnerKlingerTichy}, \cite{ChoiratSeri}. Such inequalities often bear the name Koksma--Hlawka in honour of the discoverers.

This paper deals  exclusively with point sets on the two-sphere, which are derived as perturbations of any full rank lattice under a specific equal area map and thus adding to the list of known constructions with good distribution properties -- an excerpt of which is found below for the readers convenience, where we state the origin and place of investigation to the best of our knowledge.

\begin{table}[htb]
	\centering
	\begin{tabular}{l*{2}{c}}
		\toprule
		\textit{Probabilistic algorithms} & \textit{Origin} & \textit{Invstigtd.} \\
		\midrule
		Uniformly at random & \cite{Cook} & \cite{Aistleitner},  \cite{BrauchartReznikovSaffSloanWang} \\
		
		Ensembles based on determinantal point processes  & \cite{Krishnapur}&
		\cite{AlishahiZamani},  \cite{BeltranMarzoOrtega}       \\
		
		Roots of random polynomials in $\R^2\ra\s$  & \cite{BogomolnyBohigasLeboeuf}              & \cite{ArmentoBeltranShub}     \\
		
		The diamond ensemble ({\small can also be made deterministic})   &\cite{BeltranEtayo} & \cite{BeltranEtayo}, \cite{Etayo}   \\ 
		
		Point sets based on	jittered sampling   &\cite{Bellhouse}, \cite{Beck2} & \cite{Beck}  \\ 
		\bottomrule

		\toprule
		\textit{Deterministic algorithms} & \textit{Origin} & \textit{Invstigtd.} \\
		\midrule
		Spiral points & \cite{Rakhmanov}, \cite{Bauer} & \cite{Hardin}  \\
		
		Hierarchical, Equal Area and iso-Latitude Pixelation  & \cite{Gorski}&
	\cite{Hardin}, \cite{FerHofMas}     \\
		
		Point sets using group action  &  \cite{LubotzykPhillipsSarnak}              &  \cite{LubotzykPhillipsSarnak}    \\
		
		Spherical Fibonacci lattice   & \cite{Aistleitner} & \cite{Aistleitner} \\ 
		
		Spherical Fibonacci grid   &  \cite{SwinbankPurser} & \cite{Hardin}  \\ 
		\bottomrule
	\end{tabular}
\end{table}

 We will prove a bound on the spherical cap discrepancy of our point constructions which is of the same order as the best results obtained so far for deterministic point sets found in the literature. An open and hard problem is to improve these bounds to the order obtained in a fundamental result due to Beck  in \cite{Beck, Beck2}, where we find the existence of points $P_N^{\star}$ and  constants  independent of $N$, such that 
\[cN^{-3/4} \le \sd(P_N^{\star}) \le CN^{-3/4}\sqrt{\log N}.\]
The construction of Beck is probabilistic and an algorithm to generate random point sets with discrepancy matching this upper bound with high probability was  obtained for instance in \cite{AlishahiZamani}. 

\vspace{0.2cm}

Bounds for deterministic  point sets are usually hard to derive and were first achieved by Lubotzky, Phillips and Sarnak in \cite{LubotzykPhillipsSarnak} with an upper bound of the order $\log(N)^{2/3}N^{-1/3}$.
Aistleitner, Brauchart, and Dick showed in \cite{Aistleitner}  that for spherical digital nets and spherical Fibonacci lattices the spherical cap discrepancy is upper bounded by an order of $N^{-1/2}$. Further bounds were given for the diamond ensemble  by Etayo in \cite{Etayo} and for HEALPix generated points by Hofstadler, Mastrianni and the author in  \cite{FerHofMas},  where both point sets are shown to have a spherical cap discrepancy of order $N^{-1/2}$. 

Etayo opened the contest to find  deterministic point sets $P_N$ which allow for the smallest constant bounding $\sqrt{N}\sd(P_N)$, where she took the lead in \cite{Etayo}  with the Diamond ensemble and a bound of  $4+2\sqrt{2}$. We will identify a family of point sets  where $\sqrt{18}$ will do, see Section \ref{subsec_standardlattice}.

We construct point sets  based on the method deployed in \cite{Aistleitner}, which originated in \cite{CuiFreeden}. Let $\gl$ be the group of invertible $2\times 2$ matrices acting on $\R^2$ with identity matrix $\fatone$. The Frobenius norm of the matrix $Q$ is  $\|Q\|_F=\sqrt{\mathrm{trace}\ Q^tQ}$, the Lebesgue measure on $\R^2$ is $\lambda$, and $e_1,e_2$ denote the standard basis of $\R^2$.
\begin{definition}\label{def_pointsPNtilingTN}
	A lattice $\Lambda^Q$ is a set $Q\Z^2$ for some $Q\in\gl$. The associated scaled tiling via a fundamental domain  $\Omega^Q=Q[0,1)^2\subset\R^2$ and $K\in\N$ is 
	\begin{equation*}
	\R^2=\bigcup_{p\in \Lambda^Q}T_K(p),\hspace{0.3cm}\mbox{where}\hspace{0.3cm}T_K(p)=\frac{1}{K}\Big(p+ \Omega^Q\Big).
	\end{equation*}
	 Let $I^2=[0,1)\times(0,1)$ denote the unit square with three boundary sides removed, then for fixed $K\in\N$ we choose arbitrary $z_K^p\in T_K(p)$ for $p\in\Lambda^Q$ and define a point set  
	$$ P^Q(K)=\big\{z_K^p\ |\ p\in \Lambda^Q\big\}\cap I^2.$$
\end{definition}
We note that the Lambert map $L:I^2\ra\s$, which we introduce later,  identifies two sides of $\partial I^2$ and maps two sides to the poles. This identification has an effect on the discrepancy, and to describe   it we	define a parametrization $\rho:[0,4]\ra\partial I^2$ via $f(x):=\max\big(\min(x,1)+\min(2-x,0),0\big):$
\begin{equation}\label{eq_parametrizationI2}
 x\mapsto \rho(x)\ =\ f(x)e_1+f(4-x)e_2.
\end{equation} 
Before we state our main result, we need to introduce three terms, which are bounded above by constants depending on $Q$ only, as we will show:
\begin{description}
	\item[1)] Let $N^Q(K)=\#P^Q(K)$ be the amount of points in $P^Q(K)$, and set
	$$d^Q(K)=\frac{1}{K}\Big|N^Q(K)-\frac{K^2}{|\det(Q)|}\Big|.$$
	
	\vspace{-0.3cm}
	
	\item[2)]  Set 
	\vspace{-0.2cm}
	$$C_L^Q=\sup_{w\in \s}\sup_{-1\leq t\leq1} \mathrm{length}\Big(Q^{-1}\big(L^{-1}\big(\partial C(w,t)\big)\big)\Big).$$
	
	\item[3)] 
	For $S\subset [0,1]$ and $f$ as in \eqref{eq_parametrizationI2}, we define $R(S)=\{x\in[0,4]\ |\  f(4-x)\in S\}$ and 
	\vspace{-0.1cm}
	$$Ad(S,K)=\big\{p\in \Lambda^Q\ |\ T_K(p)^o\cap\rho(R(S))\neq \emptyset \big\},$$
	the collection of points $p$ such that the interior of $T_K(p)$ intersects $\partial I^2$ at $\rho(R(S))$. The supremum of the marginal discrepancy around $\partial I^2$ then is
	$$M^Q(K)=\sup_{0\leq a<b\leq 1}\frac{K}{|\det(Q)|}\ \bigg|\sum_{p\in Ad([a,b],K)}\Big(\frac{\chi_{P^Q(K)}(z^p_K)}{N^Q(K)}	-\lambda\big(T_K(p)\cap I^2\big)\Big)\bigg|.$$
\end{description}
The term $M^Q(K)$ bounds the discrepancy that arises around $L(\partial I^2)$, while taking the geometry of images of spherical caps under $L^{-1}$ into account.

\begin{theorem}[Main Result]\label{thm_main}
	Let $Q\in\gl$, $K\in\N$ and $P^Q(K)$ be chosen as in Definition \ref{def_pointsPNtilingTN} with $N=N^Q(K)$, then 
	$$\sd(L(P^Q(K)))\ \leq\ \Big(d^Q(K)+\sqrt{2}\cdot C_L^Q+M^Q(K) \Big)\frac{\sqrt{|\det(Q)|}}{\sqrt{N}}+O(N^{-1}).$$
\end{theorem}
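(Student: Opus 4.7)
The plan is to reduce the spherical cap discrepancy to a planar counting problem using that $L$ preserves area, and then control the planar discrepancy tile by tile.

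Fix $w\in\s$ and $t\in[-1,1]$, write $C=C(w,t)$, set $A=L^{-1}(C)$ (contained in $I^2$ up to null sets) and $N(A)=\#\{p\in\Lambda^Q:z_K^p\in A\cap I^2\}$; since $L$ preserves area, $\sigma(C)=|A|$. Partition the tiles $T_K(p)$ into three classes: (i) those contained in $A$, which then also lie in $I^2$ and hence contribute exactly $1$ to $N(A)$ and exactly $|\det Q|/K^2$ to $|A|$; (ii) those disjoint from $A$, contributing $0$ to both; and (iii) those meeting $\partial A$. A type~(iii) tile has indicator $\chi_A(z_K^p)\in\{0,1\}$ and area fraction $|T_K(p)\cap A|\,K^2/|\det Q|\in[0,1]$, hence contributes at most $1$ to the count--area gap. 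Writing $n_A^-$ for the number of type~(iii) tiles, this gives
\[ \Big|N(A)-\tfrac{K^2}{|\det Q|}|A|\Big|\le n_A^-, \]
and combining with $|N-K^2/|\det Q||=Kd^Q(K)$ and $|A|\le 1$ yields $|N(A)-N|A||\le n_A^-+K\,d^Q(K)$.

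The central estimate is an upper bound on $n_A^-$. The invertible affine map $Q^{-1}$ sends the lattice tiling $\{T_K(p)\}$ to the standard grid of side $1/K$, so $n_A^-$ equals the number of grid cells met by $\gamma:=Q^{-1}(\partial A)$. Parametrizing $\gamma$ by arc length, each transition into a new cell requires crossing a grid line $\{x=i/K\}$ or $\{y=j/K\}$, and the number of such crossings is at most
\[ K\bigl(\mathrm{TV}(\gamma_x)+\mathrm{TV}(\gamma_y)\bigr)\le K\int(|\gamma_x'|+|\gamma_y'|)\,\mathrm d s\le \sqrt{2}\,K\,\mathrm{length}(\gamma), \]
the last step being the pointwise Cauchy--Schwarz inequality $|\gamma_x'|+|\gamma_y'|\le\sqrt{2}$. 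Hence $n_A^-\le 1+\sqrt{2}\,K\,C_L^Q$; a finite number of connected components of $\gamma$ only inflates the additive constant $1$ to an $O(1)$ that is absorbed below. This is precisely the tile-crossing lemma alluded to in the abstract, and it is the main obstacle: everything depending on the specific geometry of caps under $L$ is funnelled into the single quantity $C_L^Q$.

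To finish, the relation $|N-K^2/|\det Q||=K\,d^Q(K)=O(K)$ (recall $d^Q(K)=O(1)$, as follows by applying the same tile-crossing lemma to $\partial I^2$) yields $K=\sqrt{N|\det Q|}+O(1)$, hence $K/N=\sqrt{|\det Q|/N}+O(N^{-1})$. Dividing the preceding estimate by $N$,
\[ \Big|\tfrac{N(A)}{N}-|A|\Big|\le\frac{1+\sqrt{2}\,K\,C_L^Q+K\,d^Q(K)}{N}=(d^Q(K)+\sqrt{2}\,C_L^Q)\sqrt{|\det Q|/N}+O(N^{-1}), \]
uniformly in $(w,t)$; taking the supremum finishes the proof. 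Note that the tile-by-tile argument uses only that $z_K^p\in T_K(p)$, explaining why arbitrary local perturbations of the points inside their tiles leave the bound intact.
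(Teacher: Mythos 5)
Your overall architecture is the same as the paper's: reduce to a planar counting problem via the area-preserving property of $L$, split the tiles into interior/exterior/boundary classes, bound the boundary class by a tile-crossing estimate, and convert between $K$ and $N$ at the end. Your bookkeeping is correct and in places cleaner than the paper's (the three-class decomposition giving $|N(A)-\tfrac{K^2}{|\det Q|}|A||\le n_A^-$, and the relation $K/N=\sqrt{|\det Q|/N}+O(N^{-1})$, replace the paper's one-tile-at-a-time removal argument with no loss).

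The gap is your ``central estimate'', and it is essential. The inequality you assert --- that the number of grid-line crossings is at most $K\bigl(\mathrm{TV}(\gamma_x)+\mathrm{TV}(\gamma_y)\bigr)$ --- is false: a curve can cross one line $\{x=i/K\}$ back and forth $M$ times with amplitude $\epsilon$, so the crossing count is $M$ while $K\,\mathrm{TV}(\gamma_x)\approx 2KM\epsilon$ is arbitrarily small. Worse, the conclusion $n_A^-\le \sqrt{2}\,K\,\mathrm{length}(\gamma)+O(1)$ is itself false at the level of generality you work in, since you use nothing about $\gamma$ beyond rectifiability and the number of components. Concretely, take the horizontal segment through the lattice corners $(i/K,0)$, $i=1,\dots,r$, and at each corner insert a detour of size $\epsilon$ into the cell to the lower left. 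This is a simple, piecewise-$\Cone$ curve of length $(r-1)/K+O(r\epsilon)$ that meets $2r$ distinct cells ($r$ in the row above the axis and $r$ in the row below), so as $\epsilon\to0$ the ratio of cells to $K\cdot\mathrm{length}$ tends to $2>\sqrt{2}$; no additive $O(1)$ can absorb this for large $r$. Since the leading coefficient of the theorem is exactly $\sqrt{2}\,C_L^Q$, an argument using only length cannot succeed. This is precisely why the paper introduces $n$-convexity (Definition \ref{def_nconvex}) and proves Lemma \ref{lem_sharplengthbound}: an $n$-convex curve splits into at most $5n$ arcs on which \emph{both} coordinates are monotone, and on a monotone arc every transition enters a previously unvisited cell and every grid line is crossed at most once --- only there does your total-variation count become legitimate, yielding $\sqrt{2}\,K\,\mathrm{length}(Q^{-1}\beta)+19n-m+1$. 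The second missing ingredient is Corollary \ref{cor_AistleitnerEtAl}: the curves $L^{-1}(\partial C(w,t))$ are (at most three pieces, each) at most $7$-convex \emph{uniformly} in $(w,t)$, which is what makes the additive term $O(1)$ uniformly over all caps; note that my counterexample has $n\approx r$, exactly the behaviour this hypothesis rules out. To repair your proof you must establish or cite both facts; with them, your TV computation applies on each monotone piece and the rest of your write-up stands.
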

\begin{remark}
	
	A possible choice for $P^Q(K)$ is $(K^{-1}\Lambda^Q)\cap I^2$, and Theorem \ref{thm_main} shows that small numerical inaccuracies in representation of these elements do not affect the bound on discrepancy.
\end{remark}
Sometimes a less precise but more succinct expression is more desirable:

\begin{corollary}\label{cor_main}
	Let $Q\in\gl$, $K\in\N$ and $P^Q(K)$ be chosen as in Definition \ref{def_pointsPNtilingTN} with $N=N^Q(K)$, then 
		$$\sd(L(P^Q(K)))\ \leq\ \frac{\|Q\|_F}{\sqrt{|\det(Q)|}}\frac{8+3\sqrt{2}}{\sqrt{N}}+O(N^{-1}).$$
\end{corollary}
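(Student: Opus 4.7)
The plan is to apply Theorem~\ref{thm_main} after bounding both data-dependent quantities $d^Q(K)$ and $C_L^Q$ by explicit multiples of $\|Q\|_F/|\det(Q)|$. Concretely, I aim to establish
\[
d^Q(K)\ \leq\ \frac{4\,\|Q\|_F}{|\det(Q)|}+O(K^{-1}),\qquad C_L^Q\ \leq\ \frac{3\,\|Q\|_F}{|\det(Q)|},
\]
so that $d^Q(K)+\sqrt{2}\,C_L^Q\leq(4+3\sqrt{2})\|Q\|_F/|\det(Q)|+O(K^{-1})$. Substituting into Theorem~\ref{thm_main} and using $\sqrt{|\det(Q)|}\cdot\|Q\|_F/|\det(Q)|=\|Q\|_F/\sqrt{|\det(Q)|}$ then yields the claimed inequality, while the $O(K^{-1})$ slack becomes $O(N^{-1})$ via $N\asymp K^2/|\det(Q)|$.

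For $d^Q(K)$ I would observe that only cells $T_K(p)$ whose closure meets the boundary of $I^2$ can contribute to the mismatch between $N^Q(K)$ and $K^2/|\det(Q)|$. Rather than using the crude diameter-of-$\Omega^Q$ collar bound, I would decompose $\partial I^2$ into its four axis-aligned sides: the number of cells crossing a horizontal side is controlled by the vertical projected width of $\Omega^Q$, and similarly for the vertical sides. Summing the four contributions yields at most $(|a|+|b|+|c|+|d|)\,K/|\det(Q)|+O(1)$ crossing cells, where $a,b,c,d$ denote the entries of $Q$. The Cauchy--Schwarz inequality $|a|+|b|+|c|+|d|\leq 2\,\|Q\|_F$ then produces the factor $4\,\|Q\|_F/|\det(Q)|$ after dividing by $K$.

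For $C_L^Q$ the linearity of $Q^{-1}$ gives $\mathrm{length}(Q^{-1}\gamma)\leq\|Q^{-1}\|_2\cdot\mathrm{length}(\gamma)$ for every rectifiable curve $\gamma$, and the $2\times 2$ identity $\|Q^{-1}\|_2\leq\|Q^{-1}\|_F=\|Q\|_F/|\det(Q)|$ (from the reciprocal-singular-values relation $\sigma_i(Q^{-1})=1/\sigma_i(Q)$) reduces the task to showing $\mathrm{length}\bigl(L^{-1}(\partial C(w,t))\bigr)\leq 3$ uniformly in $(w,t)$. This uniform length bound is the step I expect to be the main obstacle: for a generic, non-polar spherical cap the preimage of the cap boundary under the Lambert equal-area map is a piecewise-sinusoidal closed curve in $[0,1)^2$ that may wrap across the azimuthal seam, and extracting the sharp constant $3$ requires an explicit parametrisation, a split into monotone arcs, and an optimisation over the centre $w$ and the height $t$. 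Once both estimates are in place, substitution into Theorem~\ref{thm_main} is immediate.
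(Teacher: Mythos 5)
Your overall strategy is exactly the paper's: the corollary follows by inserting into Theorem~\ref{thm_main} the two bounds $d^Q(K)\leq 4\|Q\|_F/|\det(Q)|+O(K^{-1})$ and $C_L^Q\leq 3\|Q\|_F/|\det(Q)|$, and absorbing the $O(K^{-1})$ slack into $O(N^{-1})$. Your reduction for $C_L^Q$, namely $\mathrm{length}(Q^{-1}\gamma)\leq\|Q^{-1}\|_2\,\mathrm{length}(\gamma)\leq\|Q^{-1}\|_F\,\mathrm{length}(\gamma)$ together with the $2\times2$ identity $\|Q^{-1}\|_F=\|Q\|_F/|\det(Q)|$, is correct and is precisely the content of the paper's Lemma~\ref{lem_lengthboundcurvematrix}. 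Your treatment of $d^Q(K)$ by counting cells side by side is a legitimate alternative to the paper's route (Lemma~\ref{lem_differenceKandN2}, which instead applies the intersection bound of Lemma~\ref{lem_sharplengthbound} to $\partial I^2$), but it contains a factor-of-two bookkeeping slip: each of the \emph{two} horizontal sides meets about $K(|c|+|d|)/|\det(Q)|$ cells and each of the \emph{two} vertical sides about $K(|a|+|b|)/|\det(Q)|$, so the total is $2(|a|+|b|+|c|+|d|)K/|\det(Q)|+O(1)$, not $(|a|+|b|+|c|+|d|)K/|\det(Q)|+O(1)$. With the correct count, Cauchy--Schwarz gives exactly the factor $4\|Q\|_F/|\det(Q)|$ you state, so your conclusion (and the constant $4$) is unaffected; only your intermediate claim is inconsistent with it.

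The genuine gap is the uniform bound $\mathrm{length}\bigl(L^{-1}(\partial C(w,t))\bigr)\leq 3$, i.e.\ $C_L^\fatone\leq 3$, which you explicitly defer as ``the main obstacle'' and never prove. This is where the $3$ in $4+3\sqrt{2}$ comes from, so without it the stated constant is not established. In the paper this is exactly Lemma~\ref{lem_boundonCL}: one first argues, via the projection onto the enveloping cylinder, that among caps with given extremal $z$-values the longest boundary preimage is attained by caps containing exactly one pole; one then lets those $z$-values tend to $\pm1$, parametrizes half of $\partial C(w,0)$ with $w$ approaching the pole, and bounds the resulting arclength integral using $\sinc(a)\,x\leq\sin(x)\leq x$, obtaining $\tfrac{2}{\pi}\cdot\tfrac{\pi}{2}+2=3$ in the limit. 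Since that lemma is stated and proved in the paper, your proposal becomes complete the moment you cite it (the paper's own proof of the corollary is precisely the citation of Lemmas~\ref{lem_boundonCL}, \ref{lem_lengthboundcurvematrix} and \ref{lem_differenceKandN2}); as a self-contained argument, however, it is missing exactly this hardest ingredient, and your guess that it requires an explicit parametrisation, monotone splitting, and optimisation over $(w,t)$ is an accurate description of what the paper actually does.
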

\begin{proof}
	This follows from Theorem \ref{thm_main} with  Lemma \ref{lem_boundonCL}, Lemma \ref{lem_lengthboundcurvematrix}, Lemma \ref{lem_differenceKandN2} and the remark after it. 
\end{proof}
It is easy to see from the proof of Lemma \ref{lem_lengthboundcurvematrix}  that $\sqrt{2}\sqrt{|\det(Q)|}\leq\|Q\|_F$.
If one is willing to adjust $P^Q(K)$ to $P_{mod}^Q(K)$ with the algorithm described in Section \ref{subsec_pointmod}, then an improved upper bound holds for this modified point set.
\vspace{0.3cm}

%

\textbf{Outline of the paper.} In Section \ref{sec_proofs} we first state the definition of the Lambert equal area map and prove a bound on $C_L^\fatone$. Next we  introduce the notion of $n$-convex curve, which allows us to prove in  Lemma \ref{lem_sharplengthbound} a  bound of how many fundamental domains of $\frac{1}{K}\Lambda^Q$ are intersected by it -- this result constitutes the backbone of this work. The section then ends with a bound on $d^Q(K)$ and a proof of Theorem \ref{thm_main}.
In Section \ref{sec_applications} we apply Theorem \ref{thm_main} to specific cases, this way we obtain many deterministic point sets with the least constant for the leading term  up to date.

\section{Intermediate results and proof of Theorem \ref{thm_main}}\label{sec_proofs}

\subsection{The Lambert map}
By $\s_p$ we denote the punctured unit sphere, i.e. $\s$ with the poles removed. 
\begin{definition}  
	The Lambert map $L$ is a well known area preserving map, i.e. for open sets $U\subset I^2$ and the Lebesgue measure $\lambda$, we have $\lambda(U)=\sigma(L(U))$, where
		$$
	\begin{array}{rcl}
	L:I^2\hspace{-.2cm} &\ra& \s_p\\
	\ &\ & \vspace{-.3cm} \\
	(x,y)\hspace{-.2cm}&\mt&\hspace{-.2cm}\left(\hspace{-0.2cm}\begin{array}{l}
	2\sqrt{y-y^2}\cos(2\pi x)\\
	2\sqrt{y-y^2}\sin(2\pi x)\\
	1-2y
	\end{array}\hspace{-0.2cm}\right),
	\end{array}
	$$
	and the inverse  is given in terms of the standard parametrization\footnote{Here $0\leq\phi<2\pi$ and $0< \theta< \pi$.} of $\s$	
	\begin{equation}\label{eq_inverseLambert}
	\begin{array}{rcl}
	L^ {-1}:\s_p&\ra& \hspace{-.2cm}I^2 \\
	\ &\ &\vspace{-.3cm} \\
	\left(\hspace{-0.2cm}\begin{array}{l}
	\cos(\phi)\sin(\theta)\\
	\sin(\phi)\sin(\theta)\\
	\cos(\theta)
	\end{array}\hspace{-0.2cm}\right)\hspace{-.2cm}&\mt& \hspace{-.2cm}
	\frac{1}{2\pi}\left(\hspace{-0.2cm}\begin{array}{c}
	\phi\\
	\pi(1-\cos(\theta))
	\end{array}\hspace{-0.2cm}\right).
	\end{array}
	\end{equation}
The map $L$ clearly extends to all of $\partial I^2$, with the obvious caveats. 
\end{definition}
 
\begin{lemma}\label{lem_boundonCL}
	  $C_L^\fatone\ \leq\ 3$.
\end{lemma}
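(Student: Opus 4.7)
The plan is to parameterize the boundary circle of the cap explicitly, push it forward through $L^{-1}$, and bound the length of the resulting planar curve by the sum of the total variations of its two coordinates. By the rotational symmetry of the Lambert map about the vertical axis together with the rotational invariance of the class of spherical caps, one may reduce to the case $w=(\sin\theta_0,0,\cos\theta_0)$ and $t=\cos\alpha$ for some $\theta_0,\alpha\in[0,\pi]$. With the orthonormal frame $u=(\cos\theta_0,0,-\sin\theta_0)$ and $v=(0,1,0)$ in $w^\perp$, I would write
$$p(s)=(\cos\alpha)\,w+\sin\alpha\,(\cos s\cdot u+\sin s\cdot v),\qquad s\in[0,2\pi),$$
so that $\gamma(s)=L^{-1}(p(s))=(x(s),y(s))\in I^2$ with $y(s)=(1-p_3(s))/2$ and $x(s)=\phi(s)/(2\pi)$, where $\phi(s)=\arg(p_1(s)+ip_2(s))$. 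I would then invoke the elementary estimate
$$\mathrm{length}(\gamma)=\int_0^{2\pi}\!\sqrt{x'(s)^2+y'(s)^2}\,ds\ \leq\ V_x+V_y$$
and aim for $V_x\leq 1$ and $V_y\leq 2$.

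For $V_y$, the explicit formula $p_3(s)=\cos\alpha\cos\theta_0-\sin\alpha\sin\theta_0\cos s$ shows that $y$ is monotone on each of $[0,\pi]$ and $[\pi,2\pi]$, with extrema $\sin^2((\theta_0+\alpha)/2)$ and $\sin^2((\theta_0-\alpha)/2)$. The identity $\sin^2 A-\sin^2 B=\sin(A+B)\sin(A-B)$ then yields $V_y=2\sin\theta_0\sin\alpha\leq 2$.

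The crux is the bound $V_x\leq 1$, equivalently $V_\phi\leq 2\pi$. Here I would analyze the projection of the circle $\partial C(w,t)$ onto the $(p_1,p_2)$-plane: it is the image of a circle under an affine map, hence an ellipse (possibly degenerating to a line segment), and in particular a convex planar curve. If the origin lies strictly inside this ellipse, then the region bounded by it is convex and star-shaped from $0$, so every ray from the origin meets the ellipse in exactly one point, forcing $\phi(s)$ to be strictly monotone with total increment $\pm 2\pi$; this gives $V_\phi=2\pi$. If the origin lies strictly outside, then $\phi(s)$ oscillates between the two tangent directions from the origin to the ellipse, an arc of measure strictly less than $\pi$, so $V_\phi<2\pi$. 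In either case $V_x\leq 1$, and combining with $V_y\leq 2$ gives $\mathrm{length}(\gamma)\leq 3$ uniformly in $(w,t)$.

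The principal obstacle is the borderline case in which the projected ellipse passes through the origin, equivalently when $\partial C(w,t)$ contains a pole of $\s$; then $\phi$ is singular and the dichotomy above fails. I would dispatch this case by direct inspection: the pullback $L^{-1}(\partial C(w,t))$ then consists of at most two vertical line segments in $I^2$, of total length at most $2$, which is well within the claimed bound; alternatively one may argue by continuity, approximating the degenerate configuration by nondegenerate caps.
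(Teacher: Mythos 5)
Your proposal is correct in substance but follows a genuinely different route from the paper. The paper first reduces to a claimed-extremal configuration --- a cap containing exactly one pole whose boundary passes near both poles --- and then bounds the length of the pullback of that single curve by an explicit integral, with a careful $\epsilon\to 0$ limit (using $\sinc$ estimates and an antiderivative identity) that yields $1+2=3$. You instead bound \emph{every} cap at once via the splitting $\mathrm{length}(\gamma)\le V_x+V_y$: the vertical part $V_y=2\sin\theta_0\sin\alpha\le 2$ by monotonicity of $p_3$ on the two half-circles, and the azimuthal part $V_x\le 1$ by the convex-geometry observation that the horizontal projection of $\partial C(w,t)$ is an ellipse, so the lifted angle $\phi$ either winds monotonically once (origin inside, $V_\phi=2\pi$) or sweeps back and forth across an angular window of width $<\pi$ (origin outside, $V_\phi<2\pi$). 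This is the same ``$1$ azimuthal plus $2$ vertical'' decomposition that underlies the paper's computation, but you establish it uniformly, with no integral asymptotics, and you avoid the paper's somewhat informal comparison step asserting that the one-pole configuration is extremal. What the paper's route buys in exchange is that it only ever has to analyze one concrete parametrized curve.

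One genuine flaw in your write-up: in the borderline case where $\partial C(w,t)$ contains a pole, the ``direct inspection'' claim is wrong. The pullback consists of two vertical segments only when the boundary is a great circle through \emph{both} poles ($\theta_0=\alpha=\pi/2$). A circle through a single pole (take $\alpha=\theta_0\in(0,\pi/2)$, so $t=\cos\theta_0$) pulls back to a genuine curve whose two ends approach the edge $y=0$ at $x$-values differing by $1/2$; its length need not be close to $2$. Fortunately your fallback is the right fix: approximate the degenerate cap by nondegenerate ones and use lower semicontinuity of length under pointwise convergence of the parametrized curves (away from the pole parameter), which gives precisely the inequality direction needed, $\mathrm{length}(\gamma)\le\liminf_n \mathrm{length}(\gamma_n)\le 3$. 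Keep that continuity argument and delete the vertical-segments claim.
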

\begin{proof}
Note that the  map $L^{-1}$  projects points from the sphere, parallel to the plane $\R^2\times\{0\}$, to the enveloping cylinder (or rather cylindrical surface). The cylinder is then "cut", "rolled" out flat and scaled to a square of area 1. In order to bound the length of the curve $L^{-1}(\partial C\cap \s_p)$, it will be enough to work with the projection on the enveloping cylinder, where we make following  observation that we consider obviously true: 

 Let $z_1(w,t)<1,\ z_2(w,t)>-1$ denote the maximal and minimal $z$-value of a spherical cap $C(w,t)$ in cylindrical coordinates. If $1>z_1(w',t')\geq z_1(w,t)$ and $-1<z_2(w',t')\leq z_2(w,t)$, then the length of $\partial C(w',t')$ is bigger or equal the one of $\partial C(w,t)$, and  the length of $\partial C(w,t)$ is bounded by the length of any cap $C$ with the same maximal/minimal $z$-value that contains exactly one of the poles of $\s$. This  stays true under the aforementioned scaling to the unit square.
 
Thus in order to maximize the length we let $z_1,z_2$ approach $\pm 1$ and choose $w$ so that exactly one of the poles is contained in the cap, i.e. regard $C(w,0)$ with
$$w=\big(\sin\big(\tfrac{\pi}{2}-\epsilon\big),0,\cos\big(\tfrac{\pi}{2}-\epsilon\big)\big)^t$$ and $\epsilon>0$ small.
A parametrization for $\partial C(w,0)$ is then given by 
	$$
\begin{array}{rcl}
\varphi_{\pm}:[\epsilon, \pi-\epsilon] &\ra& [0,2\pi]\\
\ &\ & \vspace{-.3cm} \\
\theta&\mt&\pm\arccos\big(-\cot(\theta)\cot\big(\frac{\pi}{2}-\epsilon\big)\big);
\end{array}
$$
such that
$$
\begin{array}{rcl}
	\gamma_{+}:[\epsilon, \pi-\epsilon] &\ra& \partial C(w,0)\\
	\ &\ & \vspace{-.3cm} \\
	\theta&\mt&\hspace{-0.3cm}\left(\begin{array}{l}
		\cos\big(\varphi_{+}(\theta)\big)\sin(\theta)\\
		\sin\big(\varphi_{+}(\theta)\big)\sin(\theta)\\
		\cos(\theta)
	\end{array}\hspace{-0.2cm}\right)
\end{array}
$$ 
defines a curve that runs from $z_1$ to $z_2$ along $\partial C(w,0)$, and $\gamma_{-}$ defines the other half.
Since $L^{-1}\circ\gamma_{+}$ and $L^{-1}\circ\gamma_{-}$ have the same length, it is enough to consider just one of these curves, bound its length and multiply by 2: thus we regard the curve
$$\frac{1}{\pi}\left(\hspace{-0.2cm}\begin{array}{c}
\arccos\big(-\cot(\theta)\cot\big(\frac{\pi}{2}-\epsilon\big)\big)\\
\pi(1-\cos(\theta))
\end{array}\hspace{-0.2cm}\right),
$$
where we already multiplied by a factor of $2$ in equation \eqref{eq_inverseLambert}. Thus we have to bound
$$
 \mathrm{length}=\int_{\epsilon}^{\pi-\epsilon}\sqrt{\frac{1}{1-\cot(\theta)^2\cot\big(\frac{\pi}{2}-\epsilon\big)^2}\frac{\cot\big(\frac{\pi}{2}-\epsilon\big)^2}{\pi^2\sin(\theta)^4}+\sin(\theta)^2}\dt.
$$
We can simplify the expression in the square root, first with the angle sum formula for the cosine (with $\epsilon <\theta<\pi-\epsilon$) to obtain
$$ \frac{-1}{\cos\big(\frac{\pi}{2}-\epsilon-\theta\big)\cos\big(\frac{\pi}{2}-\epsilon+\theta\big)}\frac{\cos\big(\frac{\pi}{2}-\epsilon\big)^2}{\pi^2\sin(\theta)^2}+\sin(\theta)^2,$$
and then with $\cos\big(\epsilon-\frac{\pi}{2}\big)=\sin(\epsilon)$ and a symmetry argument to get
$$
\mathrm{length}=2\int_{\epsilon}^{\frac{\pi}{2}}\sqrt{\frac{1}{\sin(\epsilon+\theta)\sin(\theta-\epsilon)}\frac{\sin(\epsilon)^2}{\pi^2\sin(\theta)^2}+\sin(\theta)^2}\dt.
$$
Let  $\sinc(x)=\frac{\sin(x)}{x}$, then for any choice of $a\in(0,\frac{\pi}{2}]$, we have
$$\sinc(a)\ x\leq \sin(x)\leq x, \mbox{ for }x\in[0,a];$$
which we use for small $\epsilon$ and $0<2\epsilon<a<\frac{\pi}{2}$ to obtain
\begin{align*}
	\mathrm{length}&\leq\frac{2}{\sinc(a)^2\pi}\int_{\epsilon}^{a-\epsilon}\sqrt{\frac{1}{\theta^2-\epsilon^2}\frac{\epsilon^2}{\theta^2}}\dt+
	\frac{\epsilon\cdot\pi^2}{4\sqrt{a(a-2\epsilon)^3}}
	+
	2\int_{\epsilon}^{\frac{\pi}{2}}
	\sin(\theta)\dt\\
	&=\frac{2}{\sinc(a)^2\pi}\int_{1}^{a/\epsilon-1}\sqrt{\frac{1}{x^2-1}}\frac{1}{x}\dx+
	O(\epsilon)+	2\overset{\lim \epsilon\ra 0}{=}\frac{2}{\sinc(a)^2\pi}\frac{\pi}{2}+	2,
\end{align*}
where we used that the length increases with decreasing $\epsilon$, and that
$$\frac{\mathrm{d}}{\mathrm{d}x}\arctan\big(\sqrt{x^2-1}\big)= \sqrt{\frac{1}{x^2-1}}\frac{1}{x}.
$$
The inequality above is thus true for any $0<a$, and hence also for  $\sinc(0)=1$, which proves the claim.
%
%
\end{proof}

The next lemma will help us relate $C^Q_L$ to $C^\fatone_L$ and should be known, but a reference could not be found.

\begin{lemma}\label{lem_lengthboundcurvematrix}
	Given a matrix $A\in\R^{2\times 2}$ and a finite length $\Cone$-curve $\beta$ in $\R^2$, then for $\ell_\beta=\mathrm{length}(\beta)$,  $\ell_{A\beta}=\mathrm{length}(A\beta)$, $m_A=\min\{\|Ae_1\|,\|Ae_2\|\}$ and $M_A=\max\{\|Ae_1\|,\|Ae_2\|\}$ we have
	$$\begin{array}{rlcl}
	&\ell_{A\beta}=\ell_p& \hspace{0.3cm}\mbox{if}& A \mbox{ has rank }1,\\
	m_A\cdot\ell_\beta\leq\hspace{-0.2cm}&\ell_{A\beta}\leq M_A\cdot\ell_\beta& \hspace{0.3cm}\mbox{if}& A \mbox{ has rank }2\mbox{ and $A$  orthogonal},\\
	&\ell_{A\beta}\leq\|A\|_F\cdot\ell_\beta& \hspace{0.3cm}\mbox{if}& A \mbox{ has rank }2,\\
	\end{array} $$
	where $\ell_{p}=\|Av\|\cdot \big(\max\{\langle \beta,v\rangle\}-\min\{\langle \beta,v\rangle\}\big)$ with $\|v\|=1$ and $v\perp \ker(A)$.
\end{lemma}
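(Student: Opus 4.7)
The plan is to express $\ell_A$ as $\int_a^b \|A\beta'(t)\|\,dt$ over the parameter interval of $\beta$ and to bound the integrand pointwise in terms of $\|\beta'(t)\|$ in each of the three regimes; all three length bounds then follow by integration.

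For the rank $1$ case I would use that $A$ factors through its one dimensional image: take $v$ to be a unit vector orthogonal to $\ker(A)$, so that every $x\in\R^2$ satisfies $Ax=\langle x,v\rangle\,Av$. Consequently $A\beta(t)=\langle\beta(t),v\rangle\,Av$ traces out an interval on the line $\R\cdot Av$, and the length (i.e.\ one dimensional Hausdorff measure) of its image equals $\|Av\|$ times the oscillation $\max_t\langle\beta(t),v\rangle-\min_t\langle\beta(t),v\rangle$, which is exactly $\ell_p$.

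For the two rank $2$ cases I would expand $\|A\beta'(t)\|^2$ in the standard basis using $\beta'(t)=\beta_1'(t)e_1+\beta_2'(t)e_2$:
\begin{equation*}
\|A\beta'(t)\|^2 = \|Ae_1\|^2(\beta_1'(t))^2 + 2\langle Ae_1,Ae_2\rangle\beta_1'(t)\beta_2'(t) + \|Ae_2\|^2(\beta_2'(t))^2.
\end{equation*}
When the columns of $A$ are orthogonal the cross term vanishes and the right hand side sits between $m_A^2\|\beta'(t)\|^2$ and $M_A^2\|\beta'(t)\|^2$, giving the two sided sandwich after taking square roots and integrating. In the general rank $2$ case I would combine the triangle inequality with Cauchy-Schwarz, $\|A\beta'(t)\|\leq|\beta_1'(t)|\,\|Ae_1\|+|\beta_2'(t)|\,\|Ae_2\|\leq\|A\|_F\cdot\|\beta'(t)\|$, integrating to $\ell_A\leq\|A\|_F\cdot\ell_\beta$. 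The same pointwise identity yields the companion remark $\sqrt{2|\det Q|}\leq\|Q\|_F$ via Hadamard and AM-GM, since $\|Q\|_F^2=\|Qe_1\|^2+\|Qe_2\|^2\geq 2\|Qe_1\|\|Qe_2\|\geq 2|\det Q|$.

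The main point of caution rather than difficulty lies in the rank $1$ case: naively writing $\int\|A\beta'\|\,dt=\|Av\|\int|\langle\beta'(t),v\rangle|\,dt$ gives $\|Av\|$ times the total variation of $t\mapsto\langle\beta(t),v\rangle$, which matches the oscillation formula only when this scalar projection is monotone. I would therefore interpret length in the rank $1$ case as the Hausdorff measure of the (now segment valued) image, which unconditionally agrees with $\ell_p$; the rank $2$ bounds are insensitive to this distinction and go through as described.
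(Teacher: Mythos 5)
Your proposal is correct, and for the two rank-$2$ bounds it takes a genuinely more elementary route than the paper. The paper's proof first performs a QR-type factorization $A=\mathbf{R}\,\|(a,c)\|\bigl(\begin{smallmatrix}1&\alpha\\0&\delta\end{smallmatrix}\bigr)$ with $\mathbf{R}$ a rotation, $\alpha=\langle Ae_1,Ae_2\rangle\|(a,c)\|^{-2}$ and $\delta=\det(A)\|(a,c)\|^{-2}$, then bounds the ratio $\bigl([\dot x+\alpha\dot y]^2+\delta^2\dot y^2\bigr)/(\dot x^2+\dot y^2)$ by a constant $C$ (with separate cases $\alpha\neq0$ and $\alpha=0$), and finally translates $\|(a,c)\|\sqrt{C}$ back into $\|A\|_F$, respectively $M_A$ and $m_A$. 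Your direct expansion of $\|A\beta'(t)\|^2$ in the standard basis reaches the same pointwise bounds in one step: Cauchy--Schwarz gives the Frobenius bound, and the vanishing cross term gives the two-sided bound when the columns of $A$ are orthogonal; moreover you never divide by $\|\beta'(t)\|$, so you avoid the positivity assumption $\dot x^2+\dot y^2>0$ that the paper needs. Your rank-$1$ argument (project onto the line $\R\cdot Av$ via $Ax=\langle x,v\rangle Av$) is essentially the paper's, which composes with an extra rotation $T$ sending $e_1$ to $v\perp\ker(A)$, but you make explicit a point the paper passes over in silence: $\int\|A\beta'\|\,\mathrm{d}t$ equals $\|Av\|$ times the total variation of $t\mapsto\langle\beta(t),v\rangle$, which matches the oscillation $\ell_p$ only when that projection is monotone, so the identity $\ell_A=\ell_p$ must be read as a statement about the length (one-dimensional Hausdorff measure) of the image, exactly as you note. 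Both routes yield identical constants, including the companion estimate $\sqrt{2|\det Q|}\leq\|Q\|_F$; what the paper's factorization buys is an explicit geometric picture of $A$ as a rotation composed with a shear and scaling, while yours buys brevity, fewer case distinctions, and a cleaner handling of degenerate points.
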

\begin{proof} Let $\beta(t)=\big(x(t),y(t)\big)$ be an injective $\Cone$-parametrization with $0<t<1$ (thus differing from the geometric curve at most by two endpoints)  and assume $Ae_1\neq 0$. Set
	$$A=\begin{pmatrix}
	a & b\\
	c& d
	\end{pmatrix},
	\hspace{0.3cm}\mbox{and}\hspace{0.3cm}
	\mathbf{R}=\frac{1}{\|(a,c)\|}
	\begin{pmatrix}
	a & -c\\
	c& a
	\end{pmatrix}$$ 
	with $\sqrt{a^2+c^2}=\|(a,c)\|$. We define $\alpha=\langle Ae_1,Ae_2\rangle\cdot\|(a,c)\|^{-2}$, thus 
	\begin{equation}\label{eq_decompositionofA}
	\frac{\mathbf{R}^{t}}{\|(a,c)\|}\binom{a}{c}=\binom{1}{0},  \hspace{0.3cm}\mbox{and}\hspace{0.3cm}
	\frac{\mathbf{R}^{t}}{\|(a,c)\|}\binom{b}{d}=\binom{\alpha}{\delta}\hspace{0.2cm}\mbox{with }\delta=\frac{\det(A)}{\|(a,c)\|^2}.
	\end{equation}
	Hence we have
	$$
	A=
	\mathbf{R}\|(a,c)\|\begin{pmatrix}
	1& \alpha\\
	0& \delta 
	\end{pmatrix}.$$

	\textbf{The case $\delta=0$}. By assumption there is $v,w\in\R^2$ with $\|v\|,\|w\|=1$ such that $\|Av\|>0$, $\|Aw\|=0$ and $v\perp w$. Choose a rotation matrix $T$ such that $Te_1=v$ (thus $Te_2=\pm w$) and proceed as above to obtain with $A'=AT$
	$$A'\beta(t)=\mathbf{R}'\|A'e_1\|x(t)e_1.$$
	Thus using $\hat\beta=T^{-1}\beta$ we obtain our claim from
	$$\mathrm{length}(A\beta)=\mathrm{length}(A'\hat \beta)=\|Av\|\big(\max\{\langle \hat\beta,e_1\rangle\}-\min\{\langle \hat\beta,e_1\rangle\}\big).$$
	
		\textbf{The case $\delta\neq0$}. Let $C$ be defined as $C=\delta^2+\alpha^2+1$ if  $\alpha\neq 0$ and  $C=\max\{\delta^2,1\}$ else, then we have  for $\dot{x}^2+\dot{y}^2>0$ (and with $2\alpha\dot{x}\dot{y}\leq \alpha^2\dot{x}^2+\dot{y}^2$)
	$$ \frac{[ \dot{x}+\alpha \dot{y}]^2+\delta^2\dot{y}^2}{\dot{x}^2+\dot{y}^2}=
	\frac{\dot{x}^2+2\alpha \dot{x} \dot{y}+ (\alpha^2+\delta^2 )\dot{y}^2   }{\dot{x}^2+\dot{y}^2}\leq C.$$
	Thus
	\begin{align*}
	\ell_{A\beta}&=\|(a,c)\|
	\int_{0}^{1}\sqrt{\Big\langle\mathbf{R} \binom{\dot{x}+\alpha \dot{y}}{\delta \dot{y}},\mathbf{R} \binom{\dot{x}+\alpha \dot{y}}{\delta \dot{y}}  \Big\rangle}\ \mathrm{d}t\\
	&=\|(a,c)\|
	\int_{0}^{1}\sqrt{\frac{[ \dot{x}+\alpha \dot{y}]^2+\delta^2\dot{y}^2}{\dot{x}^2+\dot{y}^2}}
	\sqrt{ \dot{x}^2+\dot{y}^2}\ \mathrm{d}t\ \leq\ \|(a,c)\|	\sqrt{C}\ \ell_\beta.
	\end{align*}
	Note that by \eqref{eq_decompositionofA} we have $\delta^2+\alpha^2=\|(\alpha,\delta)\|^2=\|(b,d)\|^2/\|(a,c)\|^2$, so that 
	$$ \delta^2+\alpha^2+1=\frac{\mbox{trace}\big(A^tA\big)}{\|(a,c)\|^2},$$
	and if $\alpha=0$, it follows that  $\|(a,c)\|	\sqrt{C}=\max\{\|(a,c)\|,\|(b,d)\|\}$. 
	Finally note that in the latter case we have $\min\{\delta^2,1\}\big(\dot{x}^2+\dot{y}^2\big)\leq \ \dot{x}^2+\delta^2\dot{y}^2$, and we proceed as above to finish the proof.
\end{proof}

\subsection{Intersection of convex curves with fundamental domains}
In this section we will introduce the notion of $n$-convex curve, for which we can prove sharp bounds on how many fundamental domains are intersected by it. This will then give the boundary contribution in the proof of Theorem \ref{thm_main}.
	\begin{definition}\label{def_nconvex}
	A continuous curve $\beta:[a,b]\ra \R^2$ ($a,b\in\R$, $a<b$) is $n$-convex for $n\in\N$, if there are points $a=t_0<t_1<\ldots<t_{n-1}<t_{n}=b$ with the property that  $\beta\big([t_j,t_{j+1}]\big)$ is a convex curve  for each $0\leq j< n$, i.e. there exist convex sets $A_1,\ldots,A_n$ with $\beta\big([t_j,t_{j+1}]\big)\subset \partial A_j$.
\end{definition}
A circle is $1$-convex, as is a straight line segment. Finite spirals (in length and winding number)  are $n$-convex. It seems that every (finite) connected part of the boundary of a pseudo-convex set, as introduced in \cite{Aistleitner}, is $n$-convex and vice versa.   This characterization is not necessary for our purposes, so we do not try to prove it here.

\begin{corollary}\label{cor_AistleitnerEtAl}
	The images $L^{-1}\big(\partial C(w,t)\big)$ for all $(w,t)\in\s\times(-1,1)$ are at most three distinct curves, each is at most $7$-convex, and they are not self-intersecting.
\end{corollary}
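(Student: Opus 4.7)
My plan is to combine a topological case analysis of $L^{-1}(\partial C(w,t))$ with an explicit algebraic computation of its inflections.

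For the count of pieces: $\partial C(w,t) = \s\cap\{\langle w,\,\cdot\,\rangle=t\}$ is a planar circle on $\s$, and the poles $(0,0,\pm 1)$ lie on it iff $\pm w_3=t$. Accordingly $\partial C(w,t)\cap\s_p$ is a single circle, a single arc, or (in the degenerate meridian case $w_3=t=0$) two arcs. Because $L$ is a bijection $I^2\to\s_p$ that fails to be a homeomorphism only across the seam $\{x_2=0,\ x_1>0\}\subset\s$, each arc may be further split by its seam crossings. These crossings are the intersections of $\s$ with the two planes $\{\langle w,\,\cdot\,\rangle=t\}$ and $\{x_2=0\}$, restricted to $\{x_1>0\}$; two planes meet in a line, which meets $\s$ in at most two points. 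A case-by-case tally of the combinations of pole removals and seam crossings then shows that the number of continuously parameterizable pieces in $I^2$ is at most three. Non-self-intersection is immediate from the injectivity of $L^{-1}$ on $\s_p$ and the embeddedness of $\partial C(w,t)$ in $\s$.

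For the $7$-convex bound: first rotate $\s$ about its polar axis so that $w_2=0$; this only translates the image horizontally in $I^2$ and preserves convexity. Parameterize $\partial C(w,t)$ by $\gamma(s)=tw+\sqrt{1-t^2}\,(\cos s\cdot u+\sin s\cdot v)$ with an orthonormal basis $u,v$ of $w^\perp$, and set $Y=y-1/2$. Then $Y$ is affine in $c=\cos s$, and on each of the two branches $\sin s>0,\ \sin s<0$ the azimuth satisfies
$$\cos(2\pi x)=g(Y):=\frac{t+2w_3 Y}{w_1\sqrt{1-4Y^2}}.$$
Since the image has coordinates $(x(Y),Y+1/2)$ with $y'(Y)=1$ and $y''(Y)=0$, its curvature vanishes exactly when $x''(Y)=0$; clearing the radicals and using $w_1^2+w_3^2=1$, this reduces to a polynomial equation in $Y$ whose degree is at most four. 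Each root contributes an inflection on each of the two branches, and the branches are separated by the seam inside the at-most-three pieces; a direct case analysis then shows that no single component accumulates more than six inflections, yielding the $7$-convex bound.

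The main obstacle lies in the simultaneous bookkeeping of the case analysis and the inflection count: one must carefully handle the degenerate configurations $w_3=\pm t$ (pole on the curve), $w_3=t=0$ (meridian great circle), and tangential rather than transverse seam crossings, and in each such case confirm both the piece-count bound of three and the inflection bound of six per piece. The degree bound in the curvature computation is technically routine but leans crucially on the normalization $w_1^2+w_3^2=1$ to avoid spurious higher-order terms.
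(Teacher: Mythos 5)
Your route is genuinely different from the paper's: the paper does not prove this corollary from scratch at all, it simply invokes the analysis of Section~6 of \cite{Aistleitner}, where the image of a cap boundary under $L^{-1}$ is shown to admit a covering by at most $7$ pseudo-convex sets, each contributing one boundary arc. Within your self-contained approach, the piece-count part (pole removals plus at most two seam crossings, since the seam and the cap plane meet $\s$ in at most two points, giving at most three arcs) and the non-self-intersection claim (injectivity of $L^{-1}$ on $\s_p$) are sound, if somewhat telescoped. Your reduction of the inflection condition is also correct: writing $u=2Y$, the condition $g''(1-g^2)+g(g')^2=0$ clears to
\begin{equation*}
\bigl(t+3w_3u+2tu^2\bigr)\Bigl[w_1^2(1-u^2)-(t+w_3u)^2\Bigr]+(t+w_3u)(w_3+tu)^2=0,
\end{equation*}
a polynomial of degree at most $4$ (leading coefficient $-2t$), as you claimed.

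The genuine gap is in converting this into the $7$-convex bound, and it is twofold. First, the decisive step --- ``a direct case analysis then shows that no single component accumulates more than six inflections'' --- is asserted, never performed, and it does not follow from what you wrote: a degree-$4$ polynomial can have four real roots, and by the mirror symmetry $x\mapsto 1-x$ each admissible root is realized on \emph{both} branches, so a single component could a priori carry up to \emph{eight} inflections, which by your own cutting scheme would only give $8$- or $9$-convexity. Ruling out the extra roots (locating which lie in the admissible range of $Y$, and handling $t=0$, $w_1=0$, poles on the circle, tangential seam crossings) is precisely the hard content of the corollary, and it is missing. Second, an inflection count does not by itself give $n$-convexity in the sense of Definition~\ref{def_nconvex}: a piece must lie on the boundary of a convex set, not merely have curvature of constant sign. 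Inside one branch this is fine, since there the piece is the graph of $x(Y)$ with $x''$ of one sign, hence lies on the boundary of an epigraph or hypograph; but pieces containing a branch junction (horizontal tangent, where your graph parametrization degenerates) or abutting the seam or a removed pole need a separate argument that you do not supply. Both defects look repairable, but as written the proposal does not establish the stated $7$-convex bound.
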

\begin{proof}
	This follows from  
	the analysis of Section 6 in \cite{Aistleitner}, where the authors show that there is an admissible covering by pseudo-convex sets of at most 7 parts (each part has some arc of the curve contained in its boundary).
\end{proof}

\begin{figure}[h]
	\centering
	\includegraphics[width=0.9\linewidth]{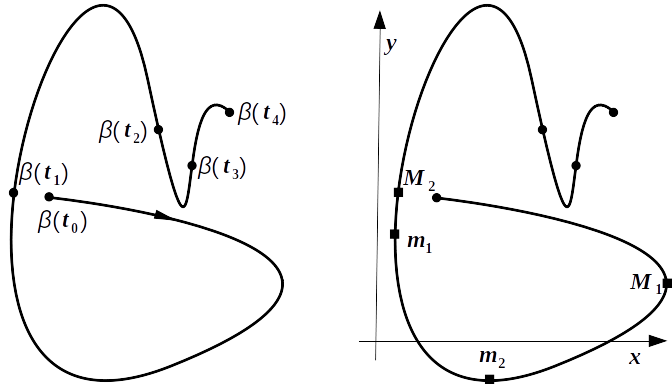}
	\caption*{\textbf{Fig. 1:} A 4-convex curve to illustrate  the proof of Lemma \ref{lem_sharplengthbound}.}
\end{figure}

\begin{definition}
	Given a  lattice $\Lambda^Q$, a continuous curve $\beta:[a,b]\ra\R^2$ and $K\in\N$. Let $\Omega^Q=Q[0,1)^2$, then the intersection number of $\beta$ with the tiling   $\frac{1}{K}\big(\Lambda^Q+ \Omega^Q\big)$ is 
	$$I_\beta^Q(K)=\#\big\{p\in\Lambda^Q\ |\ \big(\tfrac{1}{K}\Omega^Q+\tfrac{1}{K}p\big)\cap\beta\big([a,b]\big)\neq\emptyset \big\}.$$
\end{definition}

\begin{lemma}\label{lem_sharplengthbound}
	Let $\beta$ be a piece-wise $\Cone$-curve in $\R^2$,  $n$-convex with $m$-many self-intersections for $n,m\in\N_0$. Let $\Lambda^Q$ be a full rank  lattice, then 
	$$I_\beta^{Q}(K)\leq \sqrt{2}\cdot K\cdot \mathrm{length}\big(Q^{-1}\beta\big)  +19n-m+1. $$
\end{lemma}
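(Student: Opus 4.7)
The plan is to reduce the problem to the standard integer lattice via the linear map $Q^{-1}$ and then bound square-crossings of each convex piece via a monotone decomposition.

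First I would apply $Q^{-1}$: the tile $\tfrac{1}{K}(p + \Omega^Q_v)$ gets mapped to $\tfrac{1}{K}(Q^{-1}p + [0,1)^2 + Q^{-1}v)$, and since $Q^{-1}p$ ranges over $\Z^2$ as $p$ ranges over $\Lambda^Q$, the tiling $\tfrac{1}{K}(\Lambda^Q + \Omega^Q_v)$ becomes a standard translated unit-square tiling. The image $\gamma := Q^{-1}\beta$ is still piecewise $\Cone$, still $n$-convex (linear maps send convex sets to convex sets and hence boundaries to boundaries), and still has exactly $m$ self-intersections, while $\mathrm{length}(\gamma) = \mathrm{length}(Q^{-1}\beta)$. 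So it will suffice to bound, by $\sqrt{2}\,K\,\mathrm{length}(\gamma) + 19n - m + 1$, the number of unit squares of the grid $\tfrac{1}{K}\Z^2$ (up to the harmless translation by $Q^{-1}v/K$) that $\gamma$ meets.

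Second, I would split $\gamma$ into its $n$ convex arcs $\gamma_1, \dots, \gamma_n$ and establish a single-arc estimate: each convex arc of length $\ell_j$ meets at most $\sqrt{2}\,K\,\ell_j + 19$ squares. The idea is to subdivide $\gamma_j$ into sub-arcs on which both $\dot x$ and $\dot y$ keep constant sign. Because the unit tangent of a convex arc sweeps monotonically through an arc of length at most $2\pi$ on $\mathbb{S}^1$, the sign of each component of $\dot\gamma_j$ changes at most twice, so $\gamma_j$ decomposes into a small bounded number of monotone sub-arcs. On a monotone sub-arc $\eta$ of length $L_\eta$ with horizontal extent $H$ and vertical extent $V$, $\eta$ crosses at most $K H + 1$ vertical and $K V + 1$ horizontal $\tfrac{1}{K}$-grid lines, each crossed at most once by monotonicity, so $\eta$ meets at most $K H + K V + 3$ squares. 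Combined with the inequality $H + V \leq \sqrt{2}\,\sqrt{H^2 + V^2} \leq \sqrt{2}\, L_\eta$ (using that the chord $\sqrt{H^2 + V^2}$ is bounded by the arc length), this becomes at most $\sqrt{2}\,K\,L_\eta + 3$. Summing over the monotone sub-arcs of $\gamma_j$ yields the per-arc bound, with the constant $19$ emerging after careful bookkeeping of the number of monotone sub-arcs, the squares shared at transitions between them, and the squares entered at lattice vertices (where up to four squares are entered simultaneously).

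Third, summing the per-arc bound over $j = 1, \ldots, n$ gives $\sqrt{2}\,K\,\mathrm{length}(\gamma) + 19n$, plus a $+1$ that handles the base case $n = 0$ and the initial square of $\gamma$. The $-m$ correction comes from self-intersections: each of the $m$ self-intersection points of $\gamma$ lies in a square that has been counted separately by each of the two distinct convex arcs passing through it, so subtracting $m$ removes that overcount. The main obstacle is the precise constant-tracking that delivers exactly $19$ per convex arc (bounding the monotone-sub-arc count, handling lattice-vertex passages, and accounting for transition squares); the $\sqrt{2}$-factor itself is structural and comes immediately from $H + V \leq \sqrt{2}\sqrt{H^2 + V^2}$ on monotone sub-arcs.
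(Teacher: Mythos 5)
Your proposal follows essentially the same route as the paper's proof: reduce to the integer lattice via $Q^{-1}$, split each convex piece into at most five monotone sub-arcs, count tiles through grid-line crossings bounded by the horizontal and vertical extents, apply $H+V\le\sqrt{2}\sqrt{H^2+V^2}\le\sqrt{2}\,\mathrm{length}$, and subtract $m$ for tiles counted twice at self-intersections. The only cosmetic difference is the justification of the monotone decomposition — you invoke monotone turning of the tangent along a convex arc, while the paper derives monotonicity of the coordinates between their extrema directly from convexity via a polygon contradiction — and both arguments leave the final bookkeeping for the constant $19$ at a comparable level of detail.
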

\begin{proof} 
	Let $\beta:[0,1]\ra\R^2$ be a parametrization, and we can assume it to be $\Cone$ by cutting $\beta$ into sub-curves if necessary. 	The $x,y$-coordinates of $\beta$ are denoted by $x(t)=\langle \beta(t),e_1\rangle$ and $y(t)=\langle \beta(t),e_2\rangle$. 
	
	\vspace{0.3cm}
	
	We will first show monotonicity of the coordinates for $\beta$ in certain intervals. 
	Let  $t_1,\ldots,t_{n-1}$ be as in Definition \ref{def_nconvex}. If $\beta\big([0,t_1]\big)$ is a line segment, then $x(t), y(t)$ are monotonous, otherwise let $M_j,m_j\in \beta\big([0,t_1]\big)$ for $1\leq j\leq2$ satisfy
	$$\langle M_j,e_j\rangle=\max_{t\in [0,t_1]}\langle \beta(t),e_j\rangle
	\hspace{0.2cm} \mbox{ and }\hspace{0.2cm}
	\langle m_j,e_j\rangle=\min_{t\in [0,t_1]}\langle \beta(t),e_j\rangle.$$ 
	Choose  $o_j^M\in\beta^{-1}(M_j)\cap[0,t_1]$ and $o_j^m\in\beta^{-1}(m_j)\cap[0,t_1]$ for $j\in\{1,2\}$. Let $o_1,o_2,o_3,o_4$ denote these $o_j^m,o_j^M$ in ascending order. In the calculations below the pair $(\tau_1,\tau_2)$ is $(o_k,o_{k+1})$ for some $k\in\{0,1,2,3,4\}$ with $\tau_2-\tau_1>0$, where $o_0=0$ and $o_5=t_1$.

	It follows  by the convexity assumption that the $x,y$-coordinates of $\beta(t)$ are monotone for $t\in[\tau_1,\tau_2]$  as we will show: first,  for any $\kappa\in[0,t_1]$ and $\tau\in\{s\in[0,t_1]\ |\ \beta(s)\neq\beta(\kappa)\}$ we either have that the arc segment of $\beta$ with domain between $\tau$ and $\kappa$ is a straight line, or 
	$$\big\{ (1-s)\beta(\kappa)+s\beta(\tau)\ | \ 0<s<1 \big\}\cap \beta\big([0,t_1]\big)=\emptyset.$$
To prove monotonicity of $x(t)$ and $y(t)$,  we work out the example  $\beta(\tau_1)=m_2$.		 
	 The existence of numbers $\tau_1<\epsilon_1<\epsilon_2<\tau_2$ such that $y(\epsilon_1)>y(\epsilon_2)$ will lead to a contradiction in this case (by  the assumption on  $\beta(\tau_1)$, $y(t)$ is monotonously increasing): either  $x(\epsilon_1)\leq x(\epsilon_2)$ or $x(\epsilon_2)< x(\epsilon_1)$, in both cases, and depending if either $o^M_2<\tau_1$ or $o^M_2\geq \tau_2$ (by definition $o^M_2\notin(\tau_1,\tau_2)$), one of the  polygons with vertices and edges of the form  	 
	 \begin{align*}
	 	\beta(o^M_2)&\longrightarrow\beta(\tau_1)\longrightarrow\beta(\epsilon_1)\longrightarrow\beta(\epsilon_2),\\
	 	\beta(\tau_1)&\longrightarrow\beta(\epsilon_1)\longrightarrow\beta(\epsilon_2)\longrightarrow\beta(o^M_2)
	 \end{align*}
	 contradicts convexity, as we can find two points $A,B$ on the polygon, such that $\overrightarrow{AB}$ intersects it in a third point, and the same  will hence also be true for the arc segments $\beta\big([o^M_2,\tau_2]\big)$ or $\beta\big([\tau_1,o^M_2]\big)$;  (note that $y(o^M_2)\geq y(\epsilon_1)>y(\epsilon_2)\geq y(\tau_1)=y(o^m_2)$).

	 Thus  $y(t)$ is monotonously increasing, and we use this fact to show that $x$ is monotone. Assume there are numbers $\tau_1<\epsilon_1<\epsilon_2<\epsilon_3<\tau_2$ so that $x(\epsilon_2)< x(\epsilon_1)$ and $x(\epsilon_2)< x(\epsilon_3)$, and  if  either $o^*_1<\tau_1$ or $o^*_1\geq \tau_2$ (here $o^*_1$ is a place holder where $*\in\{m,M\}$ depending on the polygonal shape), one of the  polygons with vertices and edges of the form  
	 \begin{align*}
	 \beta(\tau_1)&\longrightarrow\beta(\epsilon_1)\longrightarrow\beta(\epsilon_2)\longrightarrow\beta(\epsilon_3)\longrightarrow\beta(o^*_1),\\
	 \beta(o^*_1)&\longrightarrow\beta(\tau_1)\longrightarrow\beta(\epsilon_1)\longrightarrow\beta(\epsilon_2)\longrightarrow\beta(\epsilon_3)
	 \end{align*}
	 contradicts convexity in a similar fashion as above.
	 The case: $x(\epsilon_1)< x(\epsilon_2)$ and $x(\epsilon_3)< x(\epsilon_2)$ is similar. Thus $x(t)$ is monotonous.	  
	  The other choices for $\tau_1\in\{o_1,\ldots,o_5\}$ reduce to the case above by applying rotations to $\beta\big([0,t_1]\big)$. 

\vspace{0.2cm}
	
	 We define two supporting axes  for each non-trivial arc $\beta([\tau_1,\tau_2])$ as follows: 	 
		$$
		\big\{ (1-t)\beta(\tau_j)+tZ \ |\ 0\leq t\leq 1 \big\}
	\hspace{0.3cm}\mbox{where}\hspace{0.3cm}
	Z=\binom{\langle \beta(\tau_1),e_1\rangle}{\langle \beta(\tau_2),e_2\rangle}\in\R^2.
	$$
	These line segments above are parallel to the axes, and we denote them accordingly by $L_x$ and $L_y$. Let $\gamma=\beta([\tau_1,\tau_2])$ and $Q$ be the identity matrix $\fatone$, then
	$$ I_\gamma^{\fatone}(K)\leq I_{L_x}^{\fatone}(K)+I_{L_y}^{\fatone}(K)$$
	by following argument: since $x(t),y(t)$ are monotonous  for $t\in[\tau_1,\tau_2]$, say both are increasing, then $\beta$ exits a fundamental domain $\frac{1}{K}\Omega^\fatone+\frac{1}{K}p$, where $p\in\Lambda^\fatone=\Z^2$, only by leaving it trough the top or right side.
	\begin{enumerate}
		\item 	If $\beta$ leaves through the right side, $I_\gamma^{\fatone}(K)$ and $I_{L_x}^{\fatone}(K)$ increase by one, 
		\item or else $I_\gamma^{\fatone}(K)$ and $I_{L_y}^{\fatone}(K)$ increase by one.
	\end{enumerate}
	 Thus, with $\mathrm{length}(L_x)=c$ and $\mathrm{length}(L_y)=d$, we have
	$$ I_\gamma^{\fatone}(K)\ \leq\  I_{L_x}^{\fatone}(K)+I_{L_y}^{\fatone}(K)\ \leq\  (c+d)K+4.$$
	We further use the inequality $c+d\leq \sqrt{2}\sqrt{c^2+d^2}$ valid for all $c,d\in\R$, to derive
	$$I_\gamma^{\fatone}(K)\ \leq\ \sqrt{2}\sqrt{c^2+d^2}\cdot K+4\ \leq\ \sqrt{2}\ \mathrm{length}(\gamma)\cdot K+4,$$
	where we used that the shortest path between $\beta(\tau_1)$ and $\beta(\tau_2)$ has length 	$\sqrt{c^2+d^2}$.
	The same reasoning applies to all sub-intervals (which are at most five), and after summing up and taking into account that every self-intersection counts a certain domain twice, we obtain 
	$$I_\beta^{\fatone}(K)\leq \sqrt{2}\  \mathrm{length}(\beta)\cdot K+19n+1-m.$$
	We will reduce the general case to the one above, by regarding the curve $\gamma=Q^{-1}\beta$. It is clear that $I_\gamma^{\fatone}(K)= I_\beta^{Q}(K)$.  Regularity of a curve is not affected by invertible matrices, neither is the notion of $n$-convexity nor the values $t_1,\ldots,t_n$.
\end{proof}
Note that the constant $\sqrt{2}$ in Lemma \ref{lem_sharplengthbound} cannot be improved, as the example of the translated diagonal of $I^2$ with $Q=\fatone$ already shows.

\subsection{Bound of $d^Q(K)$ and proof of the main result}
In this section we prove Theorem \ref{thm_main}, but first we bound the quantity $d^Q(K)$ and use this bound to express $N^Q(K)^{-\onehalf}$ in terms of $K$ and $Q$ in equation \eqref{eq_K2N}.

\begin{lemma}\label{lem_differenceKandN2}
	Given a  lattice $\Lambda^Q$ and $K\in\N$, then
	\begin{equation*}
	d^Q(K)\ \leq\ \sqrt{2}\cdot 2\cdot\big( \|Q^{-1}e_1\|+\|Q^{-1}e_2\|\big)  +\frac{20}{K}\ \leq\ \frac{4\ \|Q\|_F}{|\det(Q)|}  +\frac{20}{K}. 
	\end{equation*}
\end{lemma}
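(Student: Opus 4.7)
The strategy is to compare $N^Q(K)$ with the expected value $K^2/|\det(Q)|$ by exploiting that both quantities are controlled by the tiling of $\R^2$ into parallelograms $T_K(p)$, and that the discrepancy between them is concentrated on tiles straddling $\partial[0,1]^2$. The latter will be estimated via Lemma \ref{lem_sharplengthbound}.

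First, I would partition $\Lambda^Q$ according to whether $T_K(p)$ lies fully inside $I^2$, lies fully outside, or is cut by $\partial[0,1]^2$; denote by $M_{\mathrm{in}}$ and $M_\partial$ the counts of the first and third classes. By the construction of $P^Q(K)$, every interior tile contributes a point and each boundary tile contributes at most one, so $M_{\mathrm{in}} \leq N^Q(K) \leq M_{\mathrm{in}} + M_\partial$. Moreover, the interior tiles are pairwise disjoint subsets of $I^2$ while the union of interior and boundary tiles covers $I^2$; since each tile has area $|\det(Q)|/K^2$, one obtains $M_{\mathrm{in}} \leq K^2/|\det(Q)| \leq M_{\mathrm{in}} + M_\partial$. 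Consequently $N^Q(K)$ and $K^2/|\det(Q)|$ lie in the same interval of length $M_\partial$, forcing $|N^Q(K) - K^2/|\det(Q)|| \leq M_\partial$.

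Next, I would bound $M_\partial$ by the intersection number of $\partial[0,1]^2$ with the scaled tiling. As the boundary of a convex set, this closed curve is $1$-convex in the sense of Definition \ref{def_nconvex}; under the natural parametrization it has one self-intersection at the point where it closes, so $n=1$ and $m=1$. Its image under $Q^{-1}$ consists of four line segments of total length $2(\|Q^{-1}e_1\| + \|Q^{-1}e_2\|)$. Lemma \ref{lem_sharplengthbound} then delivers
$$M_\partial \ \leq\ 2\sqrt{2}\,K\,(\|Q^{-1}e_1\| + \|Q^{-1}e_2\|) + 19,$$
and dividing by $K$ yields $d^Q(K) \leq 2\sqrt{2}\,(\|Q^{-1}e_1\| + \|Q^{-1}e_2\|) + 20/K$, from which the first estimate of the lemma follows.

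For the second inequality I would apply Cauchy--Schwarz in the form $\|Q^{-1}e_1\| + \|Q^{-1}e_2\| \leq \sqrt{2}\,\|Q^{-1}\|_F$, followed by the special $2\times 2$ identity $\|Q^{-1}\|_F = \|Q\|_F/|\det(Q)|$; the latter holds because the adjugate of a $2\times 2$ matrix is obtained by swapping the diagonal entries and negating the off-diagonal entries, an operation preserving the Frobenius norm. The main obstacle is confirming that $\partial[0,1]^2$ admits a parametrization that fits Definition \ref{def_nconvex} with the claimed self-intersection count; a robust fallback is to apply Lemma \ref{lem_sharplengthbound} separately to each of the four edges (each with $n=1$ and $m=0$) and absorb the resulting larger additive constant into the $O(1/K)$ term.
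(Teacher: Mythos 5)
Your argument is correct and follows the same route as the paper's (much terser) proof: sandwich both $N^Q(K)$ and $K^2/|\det(Q)|$ between the number of tiles contained in $I^2$ and that number plus the number of tiles meeting $\partial[0,1]^2$, bound the latter count by Lemma \ref{lem_sharplengthbound} applied to $\beta=\partial[0,1]^2$ (whose image under $Q^{-1}$ has length $2(\|Q^{-1}e_1\|+\|Q^{-1}e_2\|)$), and finish with $a+b\le\sqrt{2}\sqrt{a^2+b^2}$ together with the $2\times2$ identity $\|Q^{-1}\|_F=\|Q\|_F/|\det(Q)|$. Your write-up fills in exactly the details the paper leaves implicit.

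One point must be flagged, though. What your chain actually yields is
$$d^Q(K)\ \le\ 2\sqrt{2}\,\big(\|Q^{-1}e_1\|+\|Q^{-1}e_2\|\big)+\frac{20}{K},$$
which is \emph{not} the printed middle expression of the lemma: that expression carries an extra factor $1/|\det(Q)|$, so your sentence ``from which the first estimate of the lemma follows'' is not literally true. The discrepancy lies in the paper, not in your argument. Indeed, the printed middle term is incompatible with the printed right-hand term under the very two steps both you and the author invoke (they would produce $4\|Q\|_F/|\det(Q)|^2$, not $4\|Q\|_F/|\det(Q)|$), and it is genuinely false: take $Q=2\fatone$ and $K=2M+1$ odd, and choose every $z_K^p$ whose tile meets $\partial I^2$ to lie outside $I^2$; then $N^Q(K)=M(M-1)$ while $K^2/|\det(Q)|=M^2+M+\tfrac14$, so $d^Q(K)=(2M+\tfrac14)/(2M+1)\to 1$, exceeding the printed bound $\sqrt{2}/2+20/K$ for large $M$. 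Your version (without the $1/|\det(Q)|$) is the correct statement, and it does imply the final bound $4\|Q\|_F/|\det(Q)|+20/K$, so nothing downstream (the Corollary, Theorem \ref{thm_main}) is harmed. Two minor remarks: the closed square boundary with $n=1$, $m=1$ gives the additive constant $19/K\le 20/K$, as you say; your fallback of treating the four edges separately would instead give $+80/K$, which proves a correspondingly weaker inequality --- harmless for Theorem \ref{thm_main}, where it is absorbed into $O(N^{-1})$, but not the constant stated in the lemma.
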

\begin{proof}	
	Using the notation as introduced in Definition \ref{def_pointsPNtilingTN}, we see that $|\det(Q)|K^{-2}$ is the area of each $T_K(p)$, with $p\in\Lambda^Q$. Now if we remove each $z^p_K\in P^Q(K)$ with $T_K(p)\subset I^2$, we can do this at most $\lfloor K^2 |\det(Q)|^{-1}\rfloor$ many times; which differs from $N^Q(K)$ by the amount of points $\{z^q_K\}$, where $T_K(q)$ intersects $\partial I^2$. The amount of these points is at most linear in $K$  by Lemma \ref{lem_sharplengthbound}, where the constant  can be computed explicitly since $\mathrm{image}(\beta)=\partial I^2$. The last part follows by applying the inequality $a+b\leq \sqrt{2}\sqrt{a^2+b^2}$ and the well known formula for $Q^{-1}$ in terms of elements of $Q$ and a factor of $\det(Q)^{-1}$.
\end{proof}
\begin{remark}
	The term $M^Q(K)$ can be bounded in the same fashion by the same constant $+O(K^{-1})$.
\end{remark}
For later use, we note that there is some $s_K$ with $|s_K|\leq\frac{d^Q(K)}{N^Q(K)}=O(K^{-2})$, such that
\begin{align}\label{eq_K2N}
	 \sqrt{\frac{1}{|\det(Q)|N^Q(K)}}=\frac{1}{K}\sqrt{1+\frac{K^2-|\det(Q)|N^Q(K)}{|\det(Q)|N^Q(K)}}= \frac{1}{K}+s_K.
\end{align}

\begin{proof}[Proof of Theorem \ref{thm_main}]
	Let $P^Q(K)=\{z_1,\ldots,z_N\}$ where $N=N^Q(K)$.
	Given a spherical cap $C(w,t)\subset\s$, we regard $A_{w,t}:=L^{-1}(C(w,t)\cap \s_p)\subset I^2$. Since $L$ is area preserving,
	 the proof will be complete once we can bound
	\begin{equation*}
	(\star):=	\Bigg| \frac{1}{N}\sum_{j=1}^{N}
		\chi_{A_{w,t}}(z_j)-\lambda(A_{w,t})\Bigg|. 
	\end{equation*}
	An upper bound of $(\star)$  is  established  by a classical approach\footnote{See Gauss circle problem, where Lemma \ref{lem_sharplengthbound}  could be applied. }, where $\Lambda^Q$ is split into sets $V,B$ of volume and boundary contributions  in order to apply the triangle inequality: let $\{p_1,\ldots,p_N\}\subset \Lambda^Q$ be such that $z_j\in T_K(p_j)$, then 
	$$(\star)\ \leq\ \sum_{ p\in V}\Big| \frac{1}{N}	-\lambda\big(T_K(p)\big)\Big| +\Big| \frac{1}{N}\sum_{ p_j\in B}	\chi_{A_{w,t}}(z_j)-\sum_{p_j\in B}\lambda\big(T_K(p_j)\cap A_{w,t}\big)\Big| , $$
	 where $V$ and $B$ are defined below, and where  the notation is as in Definition \ref{def_pointsPNtilingTN}. Set 
	 $$V=V(w,t,K)=\Big\{p\in\Lambda^Q\ |\  T_K(p)\subset A_{w,t} \Big\},$$
	hence $z^p_K\in A_{w,t}$ for $p\in V$, and we define the boundary term $B=B_C+B_I$ by
		$$B_C=B_C(w,t,K)=\Big\{q\in\Lambda^Q\ |\  T_K(q)\cap \big(\partial A_{w,t}\setminus\partial I^2\big)\neq\emptyset \Big\},$$
	
   and 
   \vspace{-.3cm}
			$$B_I=B_I(w,t,K)=\Big\{q\in\Lambda^Q\ |\  T_K(q)\cap \big(\partial A_{w,t}\cap\partial I^2\big)\neq\emptyset \Big\}.$$
	For each $p\in V$, the difference $\epsilon_K=N^{-1}-\lambda(T_K(p))$ is constant, thus the contribution to discrepancy coming from the area  is given by ($\#V\leq N$)
	$$|\epsilon_K|\cdot\#V\leq\Big|\frac{1}{N}-\frac{|\det(Q)|}{K^2} \Big|\cdot N=d^Q(K)\frac{|\det(Q)|}{KN}\cdot N.$$ 
    Next we bound the discrepancy coming from the boundary terms $B_C$ and $B_I$. 	
	If for $q\in B$  we have $z_K^q\notin A_{w,t}$, then  the difference inside the absolute value of $(\star)$
	has a negative contribution of $$\lambda\big(T_K(q)\cap A_{w,t}\big);$$
	 otherwise   the difference has a  contribution of $$\frac{1}{N}-\lambda\big(T_K(q)\cap A_{w,t}\big)=\lambda\big(T_K(q)\setminus A_{w,t}\big)+\epsilon_K.$$
	 Both contributions are hence bounded by $\lambda(T_K(p))$ up to $\epsilon_K$, and we obtain
	 $$ \sum_{ p\in B_C}\Big|\frac{\chi_{A_{w,t}}(z_K^p)}{N}	-\lambda\big(T_K(p)\cap A_{w,t}\big)\Big|\leq \frac{|\det(Q)|}{K}\sqrt{2}\cdot C^Q_L+O(K^{-2}),$$
	  where we applied Lemma  \ref{lem_sharplengthbound} (thanks to Corollary \ref{cor_AistleitnerEtAl}) and the definition of $C^Q_L$. Finally, the discrepancy coming from the contribution of points in $B_I$ is less than   $|\det(Q)|K^{-1}M^Q(K)$ by definition, and equation \eqref{eq_K2N} puts our bounds in terms of $N$.	
\end{proof}

\section{Applications to specific cases}\label{sec_applications}

In this section we first focus on point sets with special choice  $Q=\fatone$, where we obtain the lowest upper bound on the leading term of the spherical cap discrepancy   up to date, and the example also shows that in general the order of $O(N^{-\onehalf})$ cannot be improved. We further see that a separation distance of matching order cannot be derived in general.
   
   Next we describe an algorithm to modify a given point set $P^Q(K)$ to $P^Q_m(K)$, such that the values $d^Q_m(K), M^Q_m(K)$ related to $P^Q_m(K)$ are bounded by $\frac{10}{K}$. 
   \begin{wrapfigure}{r}{0.35\textwidth}
   	\begin{center}
   		\includegraphics[width=0.35\textwidth]{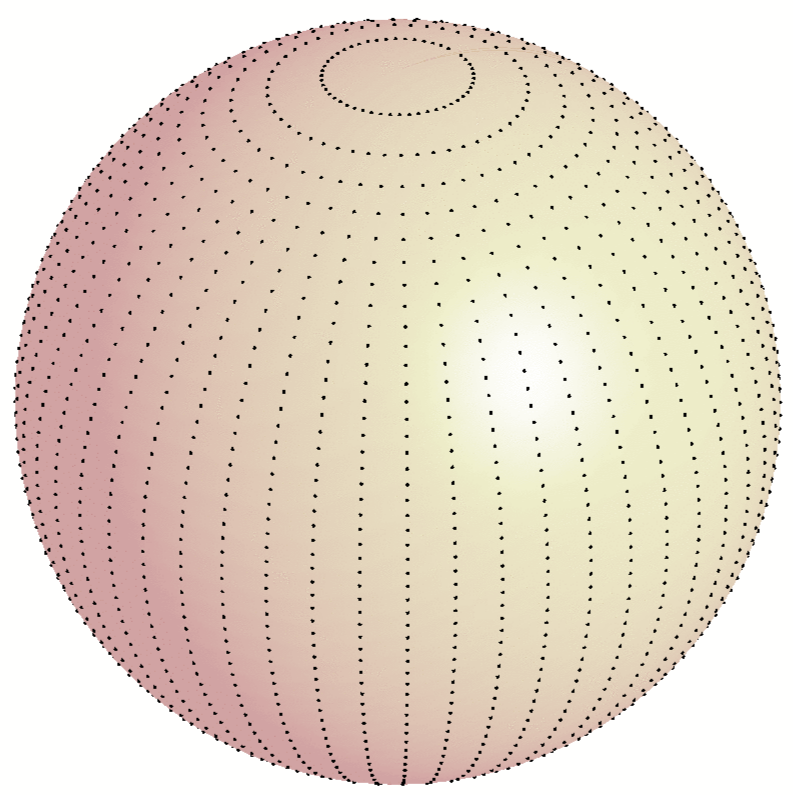}
   	\end{center}
   	\caption*{\textbf{Fig. 2:} Image of the points $P^\fatone(50)$  with $N=2500$.}
   	\vspace{-1cm}
   \end{wrapfigure}
   Lastly we give examples that are  visual evidence that some point sets of the type regarded in this article should do much better on all fronts.

	\subsection{The standard lattice}\label{subsec_standardlattice}
	Let $P^\fatone(K)=\big(\frac{1}{K}\Z^2+\frac{1}{2K}\binom{1}{1}\big)\cap I^2$. Then $d^\fatone(K)=0$ and $M^\fatone(K)=O(K^{-1})$, and we obtain  with Theorem $\ref{thm_main}$ and Lemma \ref{lem_boundonCL} the mentioned constant  in the introduction: 
	$$
	\frac{1}{2}\ \leq\ 	\sqrt{N}\cdot\sd(L(P^\fatone(K)))\ \leq\ \sqrt{18}\approx\ 4.242641.
	$$
	The lower bound comes from a special choice for a spherical cap: take as center the north pole and let the height $t\ra\frac{2K-1}{2K}^+$.

	The boundary of the aforementioned cap with height $t=\frac{2K-1}{2K}$ has length of order $K^{-1/2}$, while there are $K$ points equi-distributed on it, thus the distance between consecutive points  is of order $K^{-\frac{3}{2}}=N^{-\frac{3}{4}}$.

\subsection{Modifying point sets}\label{subsec_pointmod}

 
For $Q\in\gl$, choose $P^Q(K)$ as usual.  If $Ad([0,1],K)=\emptyset$ (recall the definition of $M^Q(K)$),
	then set $P^Q_{m}(K)=P^Q(K)$. 
	
	Otherwise we  define $P_m(K)$ by removing any $z_K^q\in P^Q(K)$ such that $q\in Ad([0,1],K)$. 
	Recall the parametrization $\rho$ of $\partial I^2$  from equation \eqref{eq_parametrizationI2}.
	
	 Let $L_t:=\big\{t_0,t_1,\ldots,t_R \big\}$ with $t_j<t_{j+1}$, be values such for each $j\in\{0,\ldots, R\}$, we have  $L_q:=\{q_0,\ldots,q_R\}\subset \Lambda^Q$ with $\rho(t_j)\in T_K(q_j)$. Let the choice be such, that $L_q$ is  maximal, i.e. no other $p\in\Lambda^Q\setminus L_q$ satisfies $\rho(\tau)\in T_K(p)$ for any $\tau$.
	 
	From these two lists $L_t,\ L_q$ we remove elements 	
	$t_{j_0}, q_{j_0}$ if 	
	 $q_{j_0}\notin Ad([0,1],K)$, to obtain reduced lists
	  $L'_t:=\{t_0',t_1',\ldots,t_{R'}'\}\subset L_t$, and  $L'_q:=\{q_0',\ldots,q_{R'}'\}\subset L_q$ with $R'\leq R$.
	   For $1\leq k\leq 4$ we collect the indices of the $t_j'$ belonging to the intervals $[k-1,k)$ into blocks
	  $$B_k=\big\{j\ |\ (k-1)\leq t_j'<k\big\}.$$
	  
	  Starting with $k=1$ we repeat following procedure: Let $s_1\in\N$ be the first instance such that  $\min(B_k)<s_1\leq \max(B_k)$ and	  
	  	$$  \frac{K^2}{|\det(Q)|}\sum_{j=\min(B_k)}^{s_1} \lambda\big(T_K(q_j')\cap I^2\big)>1.$$
	  	If there is no such instance, increase $k$ by one (up to $k=4$) and repeat the process.  Otherwise choose an arbitrary $\zeta_K^1\in T_K(q_{s_1}')\cap I^2$ and add it to $P_m^Q(K)$. Assume $\zeta^1_K, \ldots,\zeta_K^m$ have been chosen (and added to $P_m^Q(K)$), then let $s_{m+1}\in\N$ be the first instance such that $s_m<s_{m+1}\leq \max(B_k)$ and 	
	  	$$  \frac{K^2}{|\det(Q)|}\sum_{j=\min(B_k)}^{s_{m+1}} \lambda\big(T_K(q_j')\cap I^2\big)>m+1.$$
	  	If there is no such instance,  increase $k$ by one (up to $k=4$) and repeat the process. Otherwise choose an arbitrary $\zeta_K^{m+1}\in T_K(q_{s_{m+1}}')\cap I^2$ and add to $P_m^Q(K)$. Continue as long as possible -- this finishes the construction of $P_m^Q(K)$, and  the bound of $d^Q_m(K),M^Q_m(K)$ by $10\cdot K^{-1}$ is then evident.

\subsection{Orthonormal lattices}

 We choose points $P^{Q(x,y)}(K)$, where $Q$ is chosen orthonormal up to a factor, i.e. 
	$$
Q(x,y)=\frac{1}{y}\begin{pmatrix}
x & -1\\
1 & x
\end{pmatrix},\  
Q^{-1}(x,y)=\frac{y}{x^2+1}\begin{pmatrix}
x & 1\\
-1 & x
\end{pmatrix},
$$
\begin{wrapfigure}{r}{0.37\textwidth}
	\vspace{-0.7cm}
	\begin{center}
		\includegraphics[width=0.35\textwidth]{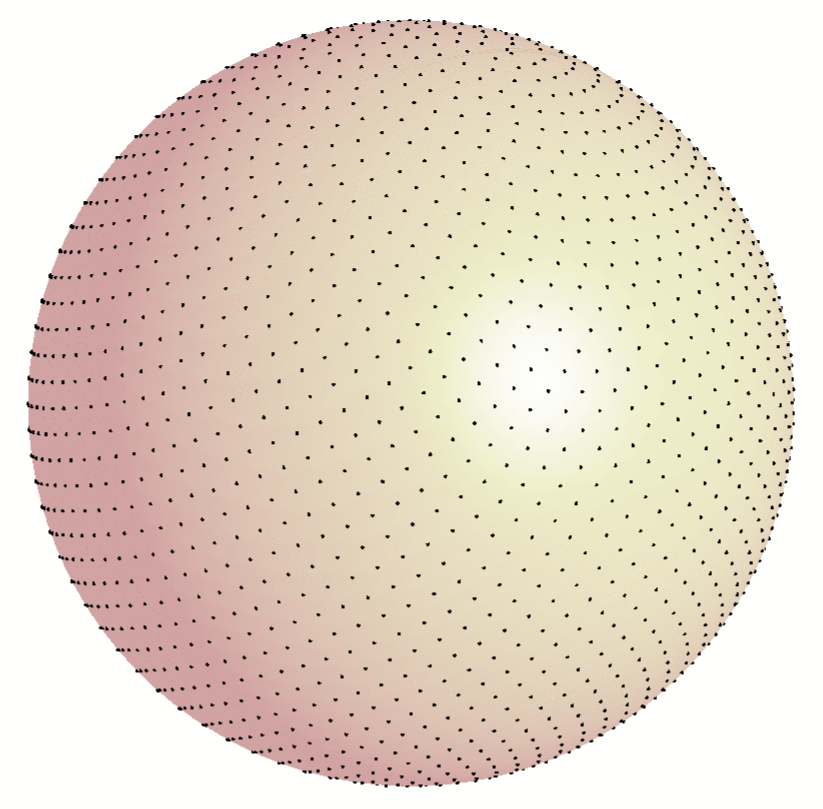}
	\end{center}
	\vspace{-0.3cm}
	\caption*{\textbf{Fig. 3:} Image of points related to the lattice $\Lambda^{Q(\varphi,1)}$ with $K=50$ and  $N^{Q}=2500$.}
	\vspace{-.5cm}	
\end{wrapfigure}
for $x\in\R$, $y>0$. Then, by Lemma \ref{lem_lengthboundcurvematrix}
$$C^{Q(x,y)}_L\leq 3\cdot \|Q^{-1}(x,y)e_1\|=3\frac{y}{\sqrt{x^2+1}}$$
 and 
$y\cdot \sqrt{\det(Q(x,y))}=\sqrt{x^2+1}.$
We  modify $P^{Q(x,y)}(K)$ as in Section \ref{subsec_pointmod}, and let $N_m^{Q(x,y)}=\#P^{Q(x,y)}(K)$, 
 so that  by Theorem \ref{thm_main}, we obtain  many deterministic point sets with the bound
$$
	\sd(L(P_{mod}^{Q(x,y)}(K)))\ \leq\  \sqrt{\frac{18}{N^{Q(x,y)}_{m}}}+O\Big(\frac{1}{K^2}\Big).
$$
For  $\varphi=\frac{1+\sqrt{5}}{2}$, the choice $Q(\varphi,1)$ yields the image in Figure 3, resembling spherical Fibonacci lattices and grids as in \cite{Aistleitner}, \cite{SwinbankPurser}. 

Note that no modification was necessary for $P^{Q(\varphi,1)}(K)$ -- this seems to be related to directional discrepancy as in \cite{BilkMaSpencer}, but we do not pursue this direction.

\begin{remark}
	Definition \ref{def_nconvex} and Lemma \ref{lem_sharplengthbound} were first developed by this author to prove the result in the paper \cite{FerHofMas} with Julian Hofstadler and Michelle Mastrianni, but the items were improved and streamlined in the current work, so the proofs will be found in both papers to make reviewing easier.
\end{remark}

\begin{remark}
	Clearly the proof idea of Theorem \ref{thm_main} extends beyond the Lambert projection to any area preserving map $\Gamma$  between a surface and shape $R\subset\R^2$ with $\partial R$ being $n$-convex -- as long as the boundary of the analog of spherical cap under $\Gamma$ is $n$-convex with universally bounded $n$ and length. Thus in \cite{FerHofMas}, one could use the HEALPix projection to derive a similar result.
\end{remark}
 
 \begin{acknowledgement}
The author thanks Dmitriy Bilyk 
and Arno Kuijlaars for skimming the text and useful remarks. The author further thanks the people involved with GNU Octave, LibreOffice and TeXstudio, which made this document possible.
\end{acknowledgement}

\end{document}